\newtheorem{lemma}{Lemma}
\newtheorem{proposition}{Proposition}
\newtheorem{theorem}{Theorem}
\date{\today }
\title[Lower bounds on the distribution of central values]{Conditional lower bounds on the distribution of central values in families of $L$-functions}
\author{Maksym Radziwi{\l}{\l} and Kannan Soundararajan} 
 \address{Department of Mathematics \\ University of Texas at Austin \\ 2515 Speedway, Stop C1200 \\ Austin \\ TX 78712} 
\email{maksym.radziwill@gmail.com}
\address{Department of Mathematics \\ Stanford University \\
450 Jane Stanford Way,  Bldg. 380\\ Stanford \\ CA 94305-2125}
\email{ksound@stanford.edu}
\thanks{The first author was partially supported by DMS-1902063. The second author is partially supported by an NSF grant, and a Simons Investigator award from the Simons Foundation} 
  \dedicatory{To Henryk Iwaniec, with admiration}
\begin{document}

\begin{abstract}  We establish a general principle that any lower bound on the non-vanishing of central $L$-values obtained through studying the one-level density 
of low-lying zeros can be refined to show that most such $L$-values have the typical size conjectured by Keating and Snaith.  
We illustrate this technique in the case of quadratic twists of a given elliptic curve, and similar results would hold for the many examples studied by Iwaniec, Luo, and Sarnak  in their pioneering work on $1$-level densities \cite{ILS}.
\end{abstract}

\maketitle

\section{Introduction}  

\noindent Selberg \cite{Se1, Se2} (see \cite{RaSo2} for a recent treatment) established that if $t$ is chosen uniformly from $[0,T]$ then the values $\log |\zeta(\tfrac 12+it)|$ are distributed approximately like a Gaussian random variable with mean $0$ and variance $\tfrac 12\log \log T$.   More recently, Keating and Snaith \cite{KeSn} have conjectured that central values in families of $L$-functions have an analogous log-normal distribution with a prescribed mean and variance depending on the ``symmetry type" of the family.  This is a powerful conjecture which gives more precise versions of conjectures on the non-vanishing of $L$-values; for example, it refines Goldfeld's conjecture (towards which remarkable progress has been made with the work of Smith \cite{smith}) that the rank in families of quadratic twists of an elliptic curve is $0$ for almost all twists with even sign of the functional equation.  In \cite{RaSo1} we enunciated a general principle which shows the upper bound (in a sense to be made precise below) part of the Keating--Saith conjecture in any family where somewhat more than the first moment can be computed.  In this paper, we consider the complementary problem of obtaining lower bounds in the Keating-Saith conjecture, which is intimately tied up with questions on the non-vanishing of $L$-values.   One analytic approach, conditional on the Generalized Riemann Hypothesis, towards such non-vanishing results is based on 
computing the $1$-level density for low lying zeros in families of $L$-functions, and our goal in this paper is to show how this approach (in the situations where it succeeds in producing a positive proportion of non-vanishing) may be refined to give corresponding lower bounds towards the Keating--Snaith conjectures.    In a later paper, we shall consider similar refinements of the mollifier method, which is another analytic approach that in many cases establishes non-vanishing results unconditionally.  Algebraic approaches such as Smith's work \cite{smith} on Goldfeld's conjecture are capable of establishing definitive non-vanishing results (or, for other examples, see Rohrlich \cite{Ro1, Ro2} and Chinta \cite{Chi}), but we are unable to refine these methods to show that the non-zero values that are produced in fact have the typical size predicted by the Keating-Snaith conjectures.  

To illustrate our method, we treat the family of quadratic twists of an elliptic curve $E$ defined over ${\Bbb Q}$ with conductor 
$N$, where the $1$-level density of low lying zeros has been studied by many authors, notably Heath-Brown \cite{HB}.   Let the associated $L$-function be 
$$ 
L(s,E) =\sum_{n=1}^{\infty} a(n) n^{-s}, 
$$ 
where the coefficients $a(n)$ are normalized such that $|a(n)| \le d(n)$.  Since elliptic curves are known to be modular, $L(s,E)$ has an analytic continuation to the entire 
complex plane and satisfies the functional equation 
$$ 
\Lambda(s,E) = \epsilon_E \Lambda(1-s,E), 
$$ 
where $\epsilon_E$, the root number, is $\pm 1$ and 
$$ 
\Lambda(s,E) = \Big(\frac{\sqrt{N}}{2\pi }\Big)^s \Gamma(s+\tfrac 12)L(s,E). 
$$ 
Throughout the paper, let $d$ denote a fundamental discriminant coprime to $2N$, and let $\chi_d=(\frac{d}{\cdot})$ denote the associated primitive quadratic character.  Let $E_d$ denote the quadratic twist of $E$ by $d$, and let its associated $L$-function be 
$$ 
L(s,E_d) = \sum_{n=1}^{\infty} a(n) \chi_d(n)n^{-s}. 
$$ 
If $(d,N)=1$ then $E_d$ has conductor $Nd^2$ and the completed $L$-function 
$$ 
\Lambda(s,E_d)= \Big(\frac{\sqrt{N}|d|}{2\pi}\Big)^{s} \Gamma(s+\tfrac 12) L(s,E_d)
$$ 
is entire and satisfies the functional equation 
$$ 
\Lambda(s,E_d)= \epsilon_E(d) \Lambda(1-s,E_d) 
$$ 
with 
$$ 
\epsilon_E(d) = \epsilon_E \chi_d(-N). 
$$ 
Note that, by Waldspurger's theorem, $L(\tfrac 12, E_d) \ge 0$. Of course $L(\tfrac 12, E_d)=0$ when 
$\epsilon_E(d)=-1$, and in this paper, we shall restrict attention to those twists with root number $1$.  Put therefore
$$ 
{\mathcal E} = \{ d: \ \ d \text{ a fundamental discriminant with } (d, 2N)=1 \text{ and } \epsilon_{E}(d)=1\}. 
$$ 

The Keating-Snaith conjectures predict that for $d\in {\mathcal E}$, the quantity $\log L(\tfrac 12,E_d)$ has an 
approximately normal distribution with mean $-\tfrac 12 \log \log |d|$ and variance $\log \log |d|$.   To state this precisely, let $\alpha < \beta$ be real numbers, 
and for any $X\ge 20$, let us define 
\begin{equation}
\label{1.1}  
{\mathcal N}(X;\alpha, \beta) = \Big| \Big\{ d\in {\mathcal E}, X < |d|\le 2X:  \ \frac{\log L(\tfrac 12,E_d)+\tfrac 12 \log \log |d|}{\sqrt{\log \log |d|}} \in (\alpha, \beta) \Big\} \Big|. 
\end{equation} 
Then the Keating-Snaith conjecture states that, for fixed intervals $(\alpha, \beta)$ and as $X\to \infty$,  
\begin{equation}
\label{1.2} 
{\mathcal N}(X;\alpha, \beta) = | \{d\in {\mathcal E}, X\le |d|\le 2X\} | \Big(\frac{1}{\sqrt{2\pi}} \int_{\alpha}^{\beta} e^{-\frac{x^2}{2} } dx + o(1) \Big). 
\end{equation}  
Here we interpret $\log L(\tfrac 12,E_d)$ to be negative infinity if $L(\tfrac 12,E_d)=0$, and the conjecture implies in particular that $L(\tfrac 12, E_d) \neq 0$ for almost all $d\in {\mathcal E}$.   Towards this conjecture, we established in \cite{RaSo1} that ${\mathcal N}(X;\alpha,\infty)$ is bounded above by the right hand side of the conjectured relation \eqref{1.2}.  Complementing this, we now establish a conditional lower bound for ${\mathcal N}(X;\alpha,\beta)$.  

\begin{theorem}  Assume the Generalized Riemann Hypothesis for the family of twisted $L$-functions $L(s,E \times \chi)$ for all Dirichlet characters $\chi$.  Then for fixed intervals $(\alpha,\beta)$ and as $X\to \infty$ we have 
$$ 
{\mathcal N}(X;\alpha,\beta) \ge  | \{d\in {\mathcal E}, X\le |d|\le 2 X\} | \Big( \frac 14 \frac{1}{\sqrt{2\pi}} \int_{\alpha}^{\beta} e^{-\frac{x^2}{2} } dx + o(1) \Big). 
$$
\end{theorem}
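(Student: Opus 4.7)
The argument rests on three ingredients under GRH: a short Dirichlet polynomial approximation to $\log L(\tfrac 12, E_d)$, Gaussian fluctuations of this polynomial as $d$ varies over $\mathcal{E}(X)$, and a pointwise control of low-lying zeros derived from the one-level density.

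Fix a parameter $x$ tending to infinity with $X$ slowly enough that $\log\log x = (1+o(1))\log\log X$. By a Selberg-type explicit formula under GRH one has
\begin{equation*}
\log L(\tfrac 12, E_d) = P(d,x) - \tfrac 12 \log\log x + R(d,x),
\end{equation*}
where $P(d,x) = \sum_{p \le x} a(p)\chi_d(p)/\sqrt{p}$ and $R(d,x)$ is dominated by the contribution of zeros $\tfrac 12+i\gamma_d$ of $L(s,E_d)$ with $|\gamma_d| \ll 1/\log x$. The shift $-\tfrac 12 \log\log x$ arises from the Rankin--Selberg estimate $\sum_{p \le x} a(p)^2/p = \log\log x + O(1)$ and accounts for the predicted mean of $\log L(\tfrac 12, E_d)$.

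The first main step is to establish Gaussian behaviour of $P(d,x)$. Averaging $P(d,x)^{2k}$ over $\mathcal{E}(X)$ and using the orthogonality of quadratic characters across fundamental discriminants reduces, via Wick-type contractions, to powers of $\sum_{p \le x} a(p)^2/p$; the odd moments vanish and the even moments recover the standard Gaussian moments. Thus $P(d,x)/\sqrt{\log\log|d|}$ is asymptotically standard Gaussian, and the matching upper bound for $\mathcal{N}(X;\alpha,\beta)$ from \cite{RaSo1} is consistent with the same input. The second step is to control $R(d,x)$ via the one-level density of Heath-Brown \cite{HB}: with a test function concentrated near zero, one bounds the number of $d \in \mathcal{E}(X)$ for which $L(s,E_d)$ has a zero within $1/\log x$ of the central point, and a Chebyshev/Markov argument forces $R(d,x) = o(\sqrt{\log\log X})$ for all $d$ outside an exceptional set of size at most $(1-c_0)|\mathcal{E}(X)|$ for some explicit $c_0 > 0$. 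On the complementary set, $\mathcal{L}_d = P(d,x)/\sqrt{\log\log|d|} + o(1)$, and the Gaussianity of $P(d,x)$ yields the theorem. The factor $\tfrac 14$ should arise from combining Heath-Brown's non-vanishing proportion with a Cauchy--Schwarz step needed to translate the averaged one-level density estimate into pointwise control of individual errors $R(d,x)$.

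The principal obstacle is exactly this last translation: the one-level density controls only smooth averages over zeros, whereas $R(d,x)$ is sensitive to a sharp truncation $|\gamma_d| \le 1/\log x$. The Fourier-support restriction on admissible test functions (even under GRH) prevents fully resolving zeros this close to $\tfrac 12$, and the resulting slack is what limits the lower bound to a constant multiple of the Keating--Snaith density rather than the full density. Overcoming it cleanly will require a careful interplay between the choice of mollifier or weighted test function and a second-moment estimate for $R(d,x)$ restricted to the putative set of bad $d$.
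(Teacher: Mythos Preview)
Your outline correctly identifies the three ingredients --- the explicit formula approximation, Gaussian moments of $P(d,x)$, and the one-level density input --- and these are exactly the propositions underlying the paper's proof. But there is a genuine gap in how you propose to combine them.

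You argue as follows: (i) the distribution of $P(d,x)/\sqrt{\log\log X}$ over all of $\mathcal{E}\cap[X,2X]$ is asymptotically standard normal; (ii) the one-level density produces a set $B$ of density $\ge 1/4$ on which $R(d,x)=o(\sqrt{\log\log X})$; (iii) therefore on $B$ the central value is governed by $P(d,x)$ and ``Gaussianity of $P(d,x)$ yields the theorem.'' Step (iii) does not follow. Knowing that $P(d,x)$ is Gaussian over the full family says nothing about its distribution on an arbitrary subset of density $1/4$: a priori the event $\{P(d,x)/\sqrt{\log\log X}\in(\alpha,\beta)\}$ could be strongly correlated with the presence of a low-lying zero, so that $B$ might be concentrated on the complement of this event. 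No Cauchy--Schwarz or Chebyshev step bridges this; Cauchy--Schwarz against $\mathbf{1}_{P\in(\alpha,\beta)}$ still requires the weighted sum $\sum_d \big(\sum_{\gamma_d}h(\gamma_d L/2\pi)\big)\mathbf{1}_{P\in(\alpha,\beta)}$, which is precisely the joint information you have not computed.

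The missing idea, and the heart of the paper's argument, is to compute the moments of $P(d,x)$ \emph{weighted} by the nonnegative zero-counting function $W(d)=\sum_{\gamma_d}h(\gamma_d L/2\pi)$. One shows via the explicit formula that $\sum_d P(d,x)^k W(d)$ has the same Gaussian shape (in $k$) as the unweighted moments, up to the scalar factor $\sum_d W(d)$; equivalently, $P(d,x)$ and $W(d)$ are asymptotically independent. Then, since $W(d)\ge 2+o(1)$ whenever there is a zero with $|\gamma_d|\le (\log X\log\log X)^{-1}$, subtracting the weighted count from the unweighted one gives directly that the proportion of $d$ with $P(d,x)/\sqrt{\log\log X}\in(\alpha,\beta)$ \emph{and} no such zero is at least $(\tfrac14+o(1))\cdot\tfrac{1}{\sqrt{2\pi}}\int_\alpha^\beta e^{-t^2/2}\,dt$. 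The $\tfrac14$ comes out cleanly here from $1-\tfrac12(\tfrac{2}{2}{\widehat h}(0)+\tfrac{h(0)}{2})$ with the Fej\'er kernel at $L=(2-\delta)\log X$, not from a Cauchy--Schwarz loss. A second, separate Markov argument (averaging $\sum_{|\gamma_d|>(\log X\log\log X)^{-1}}\log(1+(\gamma_d\log x)^{-2})$ over $d$) then shows the remaining zeros contribute $o(\sqrt{\log\log X})$ off a negligible set; your sketch collapses these two regimes into one, which obscures why the argument works.
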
  

Above we have assumed GRH for all character twists of $L(s,E)$; this is largely for convenience, and would allow us to restrict $d$ in progressions.  With more effort one could relax the assumption to GRH for the family of quadratic twists $L(s,E_d)$.  Note that the factor $\frac 14$ in our theorem matches the proportion of quadratic twists with non-zero $L$-value obtained in Heath-Brown's work \cite{HB}.

While we have described results for the family of quadratic twists of an elliptic curve, the method is very general and applies to many situations where $1$-level densities 
of low lying zeros in families have been analyzed and yield a positive proportion of non-vanishing for the central values.  The work of Iwaniec, Luo, and Sarnak  \cite{ILS} gives many such examples, and the technique described here refines their non-vanishing corollaries, showing that the non-zero $L$-values that are produced have the typical size conjectured by Keating and Snaith.   For instance, consider the family of symmetric square $L$-functions $L(s,\text{sym}^2 f)$ where $f$ ranges over Hecke eigenforms of weight $k$ for the full modular group (denote the set of such eigenforms by $H_k$), with $k\le K$ (thus there are about $K^2/48$ such $L$-values).  Assuming GRH in this family, Iwaniec, Luo, and Sarnak (see Corollary 1.8 of \cite{ILS}), showed that at least a proportion $\frac 89$ of these $L$-values are non-zero.  We may refine this to say that for any fixed interval $(\alpha, \beta)$ and as $K \to \infty$ 
$$ 
\Big| \{ f\in H_k, \ \ k \le K:  \frac{\log L(\tfrac 12, \text{sym}^2 f) - \frac 12 \log \log k}{\sqrt{\log \log k}} \in (\alpha, \beta)\Big\} \Big| 
\ge \Big(\frac 89 \frac{1}{\sqrt{2\pi }} \int_{\alpha}^{\beta} e^{-x^2/2} dx + o(1)\Big) \frac{K^2}{48}. 
$$ 

We end the introduction by mentioning the recent work of Bui, Evans, Lester, and Pratt \cite{BELP} who establish ``weighted'' (where the weight is a 
mollified central value) analogues of the Keating-Snaith conjecture.  This amounts to a form of conditioning on non-zero value since central values that are zero are assigned a weight equal to zero. The use of such a weighted measure allows \cite{BELP} to establish a full asymptotic, however as a side effect they have little control over the nature of the weight.

\noindent{\bf Acknowledgments.}   We are grateful to Emmanuel Kowalski for a careful reading of the paper, and helpful comments.  
The first author was partially supported by DMS-1902063. The second author is partially supported by an NSF grant, and a Simons Investigator award from the Simons Foundation.  
The paper was completed while KS was a Senior Fellow at the Institute for Theoretical Studies, ETH Z{\" u}rich, whom he thanks for their excellent working conditions, and warm hospitality. 

\section{Notation and statements of the key propositions}  

We begin by introducing some notation, as in our paper \cite{RaSo1}, and then describing three key propositions which underlie the proof 
of the main theorem.  Let $N_0$ denote the lcm of $8$ and $N$.  Let $\kappa$ be $\pm 1$, and let $a \bmod {N_0}$ denote a residue class with 
$a\equiv 1$ or $5 \bmod 8$.   We assume that $\kappa$ and $a$ are such that for any fundamental discriminant $d$ with sign $\kappa$ and 
with $d\equiv a \bmod {N_0}$, the root number $\epsilon_E(d) = \epsilon_E \chi_d(-N)$ equals $1$.  Define 
$$ 
{\mathcal E}(\kappa, a) = \{ d\in {\mathcal E}: \ \ \kappa d >0, \ \ d\equiv a \bmod{N_0} \} ,
$$ 
so that ${\mathcal E}$ is the union of all such sets ${\mathcal E}(\kappa, a)$.  

We write below 
$$ 
-\frac{L^{\prime}}{L}(s, E)  = \sum_{n=1}^{\infty} \frac{\Lambda_E(n)}{n^s}, 
$$ 
where $|\Lambda_E(n)| \le 2\Lambda(n)$ so that $\Lambda_E(n) = 0$ unless $n = p^k$ is a prime power.  
If $p\nmid N_0$, we may write $a(p) = \alpha_p+ \overline{\alpha_p}$ for a complex number $\alpha_p$ of magnitude $1$ (unique up to complex conjugation),  and 
then 
$$ 
\Lambda_E(p^k) = (\alpha_p^k + \overline{\alpha_p}^k) \log p. 
$$ 
Note that 
$$ 
-\frac{L^{\prime}}{L}(s,E_d) = \sum_{n=1}^{\infty} \frac{\Lambda_E(n)}{n^s} \chi_d(n). 
$$ 
For fundamental discriminants $d \in {\mathcal E}$ with $|d| \le 3X$,  and a parameter $3 \le x $ define 
\begin{equation} 
\label{2.1} 
{\mathcal P}(d;x)  = \sum_{ \substack{p \le x \\ p\nmid N_0} } \frac{a(p)}{\sqrt{p}} \chi_d(p). 
\end{equation}

Let $h$ denote a smooth function with compactly supported Fourier transform 
$$ 
{\widehat h}(\xi) = \int_{-\infty}^{\infty} h(t)e^{-2\pi i \xi t} dt,  
$$ 
and such that $|h(x)| \ll (1+x^2)^{-1}$ for all $x \in {\mathbb R}$.  For concreteness, one could simply consider 
$h$ to be the Fejer kernel given by 
\begin{equation} 
\label{2.1a} 
h(x) = \Big( \frac{\sin(\pi x)}{\pi x}\Big)^2, \qquad {\widehat h}(t) = \max(1-|t|, 0). 
\end{equation} 

Lastly, let $\Phi$ denote a smooth, non-negative function compactly supported in $[\frac 12, \frac 52]$ with 
$\Phi(x)= 1$ for $x \in [1,2]$, and we put ${\check \Phi}(s) = \int_0^{\infty} \Phi(x) x^s dx$.  
Below all implied constants will be allowed to depend on $N$, $h$, and $\Phi$, which are considered fixed.  

Our first proposition connects $\log L(\frac 12, E_d)$ with the sum over primes ${\mathcal P}(d;x)$ 
(for suitable $x$) with an error term given in terms of the zeros of $L(s,E_d)$.  Such formulae have a long 
history, going back to Selberg, and the work here complements an upper bound version that played a key 
role in \cite{Sou1}.  

\begin{proposition}  \label{prop1} Let $d$ be a fundamental discriminant in ${\mathcal E}$, and let $3 \le x \le |d|$.  Assume GRH for $L(s,E_d)$, and suppose 
that $L(\tfrac 12, E_d)$ is not zero.   Let $\gamma_d$ run over the ordinates of the non-trivial zeros of $L(s,E_d)$.    
Then 
$$ 
\log L(\tfrac 12, E_d) =  {\mathcal P}(d;x) - \tfrac 12 \log \log x +O\Big( \frac{\log |d|}{\log x} + \sum_{\gamma_d} \log \Big(1 + \frac{1}{(\gamma_d \log x)^2}\Big) \Big). 
$$ 
\end{proposition}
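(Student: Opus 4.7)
The plan is to derive the formula by splitting $\log L(\tfrac 12, E_d)$ into two pieces via
\begin{equation*}
\log L(\tfrac 12, E_d) = \log L(\sigma_0, E_d) - \int_{1/2}^{\sigma_0} \frac{L'}{L}(\sigma, E_d)\, d\sigma,
\end{equation*}
where $\sigma_0 = \tfrac 12 + c/\log x$ for a small absolute constant $c>0$. Under GRH, together with the hypothesis $L(\tfrac 12, E_d)\neq 0$, the function $\log L(\sigma, E_d)$ is real analytic on $[\tfrac 12, \sigma_0]$ (recall that $L(\tfrac 12, E_d)\geq 0$ by Waldspurger), so the identity is legitimate. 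The first piece will produce the prime sum ${\mathcal P}(d;x)$ and the term $-\tfrac 12\log\log x$, while the second piece will produce the zero contribution.

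For $\log L(\sigma_0, E_d)$ I would run a standard Perron-type argument. Starting from the Dirichlet series $\log L(s, E_d) = \sum_n \Lambda_E(n)\chi_d(n)/(n^s\log n)$ in its region of absolute convergence, one inserts a smooth cutoff at $n\approx x$ via Mellin inversion and shifts the contour past $w = 0$; this produces $\sum_{n\leq x}\Lambda_E(n)\chi_d(n)/(n^{\sigma_0}\log n)$ modulo an error $O(\log|d|/\log x)$ from the shifted line, reflecting the fact that the analytic conductor of $L(s, E_d)$ is $\asymp|d|^2$. Since $\sigma_0 - \tfrac 12 = O(1/\log x)$, replacing $n^{\sigma_0}$ by $\sqrt{n}$ costs at most $O(1/\log x)$. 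The $n=p$ contribution is then exactly ${\mathcal P}(d;x)$ because $\Lambda_E(p) = a(p)\log p$. For $n=p^2$ with $p\nmid N_0$, using $\Lambda_E(p^2) = (\alpha_p^2 + \overline{\alpha_p}^2)\log p = (a(p)^2 - 2)\log p$ and $\chi_d(p)^2 = 1$ for $(p,d)=1$, this contribution equals $\tfrac 12\sum_{p\leq\sqrt{x}}(a(p)^2 - 2)/p + O(1)$; the Rankin--Selberg asymptotic $\sum_{p\leq y}a(p)^2/p = \log\log y + O(1)$, known unconditionally from the analytic continuation and non-vanishing on $\mathrm{Re}(s)=1$ of $L(s,\mathrm{sym}^2\,E)$, then yields $-\tfrac 12\log\log x + O(1)$. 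The tail over $n = p^k$ with $k\geq 3$ is $O(1)$ absolutely.

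For the integral piece I would invoke the Hadamard product decomposition
\begin{equation*}
-\frac{L'}{L}(\sigma, E_d) = -\sum_{\gamma_d}\Bigl(\frac{1}{\sigma - \tfrac 12 - i\gamma_d} + \frac{1}{\tfrac 12 + i\gamma_d}\Bigr) + O(\log|d|),
\end{equation*}
where the $O(\log|d|)$ absorbs gamma-factor and Hadamard constant contributions. Integrating in $\sigma$ from $\tfrac 12$ to $\sigma_0$ gives, term by term, $\log\bigl((\sigma_0 - \tfrac 12 - i\gamma_d)/(-i\gamma_d)\bigr)$; since the final answer must be real (as $\log L(\sigma, E_d)\in\mathbb{R}$ on $[\tfrac 12, \sigma_0]$), only the real parts survive, and they are $\tfrac 12\log(1 + ((\sigma_0-\tfrac 12)/\gamma_d)^2) \ll \log(1 + (\gamma_d\log x)^{-2})$. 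Summing over zeros reproduces the desired zero contribution, while the $O(\log|d|)$ term integrated over an interval of length $c/\log x$ contributes $O(\log|d|/\log x)$.

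The main obstacle I anticipate is bookkeeping rather than a single hard step: the error $O(\log|d|/\log x)$ must be extracted cleanly from both the Perron shift (where the analytic conductor $|d|^2$ enters through the far-left line integral) and the Hadamard integration, and the zero sum must appear in the sharp form $\sum_{\gamma_d}\log(1 + (\gamma_d\log x)^{-2})$, not merely a crude $O(\log\log|d|)$. The tuned choice $\sigma_0 - \tfrac 12 \asymp 1/\log x$ is precisely what synchronises the two pieces and lets the prime sum, the prime-square constant, and the zero sum all appear with compatible errors.
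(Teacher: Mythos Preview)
Your Hadamard step for the integral over $[\tfrac12,\sigma_0]$ is correct and does produce the zero sum in the right shape, but the Perron step has a real gap. You assert that a contour shift gives
\[
\log L(\sigma_0,E_d)=\sum_{n\le x}\frac{\Lambda_E(n)\chi_d(n)}{n^{\sigma_0}\log n}+O\!\Big(\frac{\log|d|}{\log x}\Big),
\]
with no zero contribution on the right. This cannot hold in general: your own Hadamard calculation, applied between $\tfrac12$ and $\sigma_0$, shows that $\log L(\sigma_0,E_d)-\log L(\tfrac12,E_d)=\tfrac12\sum_{\gamma_d}\log\bigl(1+(c/(\gamma_d\log x))^2\bigr)+O(\log|d|/\log x)$, so $\log L(\sigma_0,E_d)$ already feels the low-lying zeros to the same order as the final zero sum. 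Concretely, to shift the Mellin contour you must either stay inside the zero-free strip $\mathrm{Re}(w)>\tfrac12-\sigma_0$, a strip of width $<c/\log x$ in which $x^w=O(1)$ and the GRH bound $\log L\ll \log|d|/\log\log|d|$ is the best available, giving an error of that size rather than $O(\log|d|/\log x)$; or push past the critical line, where $\log L$ has branch points at the zeros and a zero term must be carried through this piece as well. Either way the first piece is not zero-free, and the heuristic ``conductor $\asymp|d|^2$ so error $\asymp\log|d|/\log x$'' does not survive scrutiny.

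The paper sidesteps this by working with $-L'/L$ rather than $\log L$: it uses Selberg's identity
\[
-\frac{L'}{L}(\sigma,E_d)=\sum_{n\le x}\frac{\Lambda_E(n)\chi_d(n)}{n^{\sigma}}\frac{\log(x/n)}{\log x}+\frac{1}{\log x}\Big(\frac{L'}{L}\Big)'(\sigma,E_d)+\frac{1}{\log x}\sum_{\rho_d}\frac{x^{\rho_d-\sigma}}{(\rho_d-\sigma)^2}+O\Big(\frac{1}{x^{\sigma}\log x}\Big),
\]
which already carries the zeros explicitly as simple poles, and then integrates once in $\sigma$ from $\tfrac12$ to $\infty$. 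The integrated prime sum gives $\mathcal P(d;x)-\tfrac12\log\log x+O(1)$ exactly as you describe; the term $-\tfrac{1}{\log x}\tfrac{L'}{L}(\tfrac12,E_d)$, evaluated via the functional equation as $\tfrac{1}{\log x}\log(\sqrt N\,|d|)+O(1)$, supplies the $O(\log|d|/\log x)$; and the integrated zero term $\tfrac{1}{\log x}\sum_{\gamma_d}\mathrm{Re}\int_{1/2}^{\infty}x^{\rho_d-\sigma}(\rho_d-\sigma)^{-2}\,d\sigma$ is bounded directly by $O\bigl(\sum_{\gamma_d}\log(1+(\gamma_d\log x)^{-2})\bigr)$ by splitting at $|\gamma_d\log x|=1$. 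Your decomposition could be made to work, but you would have to track a zero term in the Perron piece too and then recombine; at that point it is simpler to integrate the Selberg identity from $\tfrac12$ in one stroke.
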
 

To analyze sums over the zeros we shall use the following proposition, whose proof is based on the explicit formula.  
The ideas behind this proposition are also familiar, and in this setting (and in the case $\ell =1$ below) may be traced back to the work of Heath-Brown \cite{HB}.    

\begin{proposition}  \label{prop2}  Let $h$ be a smooth function with $h(x)\ll (1+x^2)^{-1}$ and  whose Fourier transform is compactly supported in $[-1,1]$.  
Let $L\ge 1$ be a real number, and $\ell$ be a positive integer coprime to $N_0$, and assume that $e^{L} \ell^2 \le X^2$.  If $\ell$ is neither a square, nor a prime times a square, then  
\begin{equation} 
\label{2.11} 
\sum_{d \in {\mathcal E}(\kappa, a)} \Big(\sum_{\gamma_d} h\Big(\frac{\gamma_d L}{2\pi}\Big) \Big) \chi_d(\ell) \Phi\Big( \frac{\kappa d}{X}\Big) \ll X^{\frac 12+\epsilon} \ell^{\frac 12} e^{\frac L4}.   
\end{equation} 
If $\ell$ is a square then 
\begin{align} 
\label{2.12} 
\sum_{d \in {\mathcal E}(\kappa, a)} \Big(\sum_{\gamma_d} h\Big(\frac{\gamma_d L}{2\pi}\Big) \Big)& \chi_d(\ell) \Phi\Big( \frac{\kappa d}{X}\Big) 
= O(X^{\frac 12+\epsilon} \ell^{\frac 12} e^{\frac L4}) \nonumber \\ 
&+ \frac{X}{N_0} \prod_{p|\ell} \Big(1 +\frac 1p\Big)^{-1} \prod_{p\nmid N_0} \Big(1 -\frac 1{p^2}\Big) {\widehat \Phi}(0) \Big( \frac{2\log X}{L} {\widehat h}(0) + \frac{h(0)}{2} + 
O\Big( \frac 1L\Big) \Big). 
\end{align}
Finally if $\ell$ is $q$ times a square, for a prime number $q$, then 
\begin{equation} 
\label{2.13} 
\sum_{d \in {\mathcal E}(\kappa, a)} \Big(\sum_{\gamma_d} h\Big(\frac{\gamma_d L}{2\pi}\Big) \Big) \chi_d(\ell) \Phi\Big( \frac{\kappa d}{X}\Big)  
\ll \frac{X}{LN_0} \frac{\log q}{\sqrt{q}} \prod_{p|\ell} \Big(1+ \frac 1p\Big)^{-1} + X^{\frac 12+\epsilon} \ell^{\frac 12} e^{L/4}.
\end{equation} 
\end{proposition}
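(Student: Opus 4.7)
The plan is to apply the explicit formula for each $L(s, E_d)$ and then average over $d \in \mathcal{E}(\kappa,a)$ against the weight $\chi_d(\ell)\Phi(\kappa d/X)$. Since $\widehat{h}$ is supported in $[-1,1]$ and the analytic conductor of $L(s,E_d)$ is $\asymp d^2$, the explicit formula yields
\[
\sum_{\gamma_d} h\!\Big(\frac{\gamma_d L}{2\pi}\Big) \;=\; \frac{2\log|d|}{L}\,\widehat{h}(0) \;-\; \frac{2}{L}\sum_{n \le e^L} \frac{\Lambda_E(n)\,\chi_d(n)}{\sqrt{n}}\,\widehat{h}\!\Big(\frac{\log n}{L}\Big) \;+\; O\!\Big(\frac{1}{L}\Big),
\]
where the error absorbs the archimedean digamma contribution and the constants from the functional equation.

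The central analytic input is then the quadratic character sum $S(m) := \sum_{d \in \mathcal{E}(\kappa,a)} \chi_d(m)\Phi(\kappa d/X)$. Poisson summation on fundamental discriminants in the progression $a \bmod N_0$ (with the prescribed sign) gives
\[
S(m) \;=\; \mathbf{1}[m = \square]\cdot \frac{X}{N_0}\prod_{p \mid m}\!\Big(1+\tfrac{1}{p}\Big)^{\!-1}\!\prod_{p \nmid N_0}\!\Big(1-\tfrac{1}{p^2}\Big)\widehat{\Phi}(0) \;+\; (\text{Gauss--sum error}),
\]
and the hypothesis $e^L\ell^2 \le X^2$ is calibrated so that the dual Poisson sum in the error, when aggregated against the truncated prime sum, contributes $O(X^{1/2+\epsilon}\ell^{1/2}e^{L/4})$ in all three cases. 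This character-sum input is standard in this setting (cf.\ Heath-Brown \cite{HB}).

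I then split into three cases according to the square class of $\ell$. In case \eqref{2.11}, the squarefree part of $\ell$ has at least two distinct prime divisors, so $n\ell$ is never a square for any prime power $n$; the Poisson main term of $S(n\ell)$ never activates and only the Gauss-sum error survives. In case \eqref{2.12} ($\ell = \square$), $n\ell$ is a square iff $n = p^{2j}$. The log-conductor piece paired with $S(\ell)$ produces the $\frac{2\log X}{L}\widehat{h}(0)$ factor, while the $j=1$ diagonal of the prime sum,
\[
-\frac{2}{L}\sum_p \frac{(a(p)^2 - 2)\log p}{p}\,\widehat{h}\!\Big(\frac{2\log p}{L}\Big),
\]
reduces via partial summation to $h(0)/2 + O(1/L)$, using the Rankin--Selberg identity $\sum_{p \le y}(a(p)^2 - 2)(\log p)/p = -\log y + O(1)$ (a consequence of $L(s, E \times E) = \zeta(s)L(s, \text{sym}^2 E)$ having a simple pole at $s=1$ with $L(1, \text{sym}^2 E)\neq 0$) together with $\int_0^\infty \widehat{h}(t)\, dt = h(0)/2$; contributions from $j \ge 2$ are swept into the error. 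Finally in case \eqref{2.13} ($\ell = q\cdot\square$), only $n = q\cdot(\text{even prime power})$ makes $n\ell$ a square, and the dominant $n=q$ term produces the $\frac{X}{LN_0}\frac{\log q}{\sqrt{q}}\prod_{p\mid\ell}(1+1/p)^{-1}$ bound (absorbing $|\widehat{h}(\log q/L)| \ll 1$), with the remaining $n$'s contributing to the error.

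The principal obstacle is the uniform Gauss-sum error analysis: one must verify that the non-principal Poisson dual of $S(m)$, when aggregated against $\Lambda_E(n)/\sqrt{n}\cdot\widehat{h}(\log n/L)$ for $n \le e^L$, consolidates into precisely $X^{1/2+\epsilon}\ell^{1/2}e^{L/4}$ and that the condition $e^L\ell^2 \le X^2$ balances the Poisson dual against the primary $n$-sum. The secondary subtlety is the Rankin--Selberg extraction of the constant $h(0)/2$ in case \eqref{2.12}; this is where the orthogonal symmetry type of the family $\{E_d\}$ registers, and is the true source of the one-level density input that will drive the lower bound in the main theorem.
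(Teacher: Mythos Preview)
Your overall architecture matches the paper's exactly: explicit formula for $\sum_{\gamma_d} h(\gamma_d L/2\pi)$, split into the log--conductor piece $S_1$ and the prime-power piece $S_2$, then a case analysis on the square class of $\ell$ to isolate the diagonal $n\ell = \square$ from the Poisson main term. Your identification of the main terms is correct, including the Rankin--Selberg extraction of $h(0)/2$ via $\sum_{p\le y}(a(p)^2-2)(\log p)/p = -\log y + O(1)$ and the $n=q^{2j+1}$ diagonal in the $\ell = q\cdot\square$ case.

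The gap is precisely where you yourself flag ``the principal obstacle'': you treat the non-principal Poisson contribution to $S(n\ell)$ as a black box, and a naive termwise bound does \emph{not} give $X^{1/2+\epsilon}\ell^{1/2}e^{L/4}$. If one bounds each $S(n\ell)$ individually (Poisson dual or P\'olya--Vinogradov) by $O(X^{1/2+\epsilon}(n\ell)^{1/2})$ and sums $\sum_{n\le e^L}\Lambda(n)/\sqrt{n}\cdot (n\ell)^{1/2}$, the result is $X^{1/2+\epsilon}\ell^{1/2}e^{L}$, which is useless. The paper obtains the saving in three steps you do not mention: (i) detect square-freeness of $d$ by M\"obius with a cutoff $\alpha\le A$ versus $\alpha>A$, and use GRH for $L(s,E_d)$ to bound the $\alpha>A$ tail by $O((X/A)\log X)$; (ii) for $\alpha\le A$, Poisson in $d$ produces a dual sum over $v$, and the crucial move is to \emph{interchange} the sums over $n$ and $v$ and apply GRH for $L(s,E\times\chi)$ (quadratic $\chi$ times characters mod $N_0$) to the $n$-sum at fixed $v$, yielding a bound $\ll \alpha\ell^{3/2}(X|v|)^{-1/2}X^{\epsilon}$; (iii) the choice $A=(X/\ell)^{1/2}e^{-L/4}$ balances (i) against (ii) and is what produces the factor $e^{L/4}$. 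The hypothesis $e^L\ell^2\le X^2$ is exactly the condition $A\ge 1$. Without the interchange and the second invocation of GRH on the $n$-sum, the claimed error is unreachable; this is also where the paper's standing GRH assumption for \emph{all} character twists of $L(s,E)$ (not just quadratic) enters.
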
  

Finally, to understand the distribution of ${\mathcal P}(d;x)$ both when $d$ is chosen uniformly over discriminants $d \in {\mathcal E}$, 
and when $d \in {\mathcal E}$ is weighted by contributions from low-lying zeros, we shall use the method of moments, drawing upon the following 
proposition. 

\begin{proposition} \label{prop3}  Let $k$ be any fixed non-negative integer.  Let $X$ be large, and put $x=X^{1/\log \log \log X}$.  Then 
\begin{equation} 
\label{2.2} 
\sum_{d\in {\mathcal E}(\kappa,a)} {\mathcal P}(d;x)^k \Phi\Big( \frac{\kappa d}{X}\Big) = \Big( \sum_{d \in {\mathcal E}(\kappa, a)} \Phi \Big( \frac{\kappa d}{X} \Big) 
\Big) (\log \log X)^{\frac k2} (M_k +o(1)), 
\end{equation} 
where $M_k$ denotes the $k$-th Gaussian moment: 
$$ 
M_k = \frac{1}{\sqrt{2\pi}} \int_{-\infty}^{\infty} x^k e^{-\frac{x^2}{2}} dx = \begin{cases} 
\frac{k!}{2^{k/2} (k/2)!} &\text{ if  } k \text{ is even}\\ 
0 &\text{if  } k \text{ is odd}.\\ 
\end{cases}
$$ 
Further, for any parameter $L\ge 1$ with $e^{L} \le X^2$ we have,  
\begin{align} 
\label{2.3} 
\sum_{d\in {\mathcal E}} {\mathcal P}(d;x)^k& \Big( \sum_{\gamma_d} h\Big( \frac{\gamma_d L}{2\pi } \Big) \Big) \Phi\Big( \frac{\kappa d}{X}\Big) 
= O(X^{\frac 12+\epsilon}e^{\frac L4}) \nonumber \\ 
&+\frac{X}{N_0} \prod_{p\nmid N_0} \Big( 1-\frac{1}{p^2}\Big) 
  {\widehat \Phi}(0) \Big( \frac{2\log X}{L} {\widehat h}(0)+ \frac{h(0)}{2}+O\Big( \frac 1L\Big)\Big) (M_k +o(1)) (\log \log X)^{\frac k2}.
\end{align}
\end{proposition}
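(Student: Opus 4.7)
The plan is to compute the $k$th moment by expanding
\begin{equation*}
{\mathcal P}(d;x)^k = \sum_{p_1, \ldots, p_k \le x} \frac{a(p_1) \cdots a(p_k)}{\sqrt{p_1 \cdots p_k}} \chi_d(p_1 \cdots p_k),
\end{equation*}
swapping orders, and grouping the $k$-tuples $(p_1, \ldots, p_k)$ by the square-class of the product $\ell := p_1 \cdots p_k$: (a) $\ell$ is a perfect square, (b) $\ell$ is a prime times a square, or (c) neither. For \eqref{2.3}, the inner sums over $d$ in each case are supplied directly by Proposition~\ref{prop2} (via \eqref{2.12}, \eqref{2.13}, and \eqref{2.11} respectively). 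For \eqref{2.2}, the unweighted inner sums $\sum_d \chi_d(\ell)\Phi(\kappa d/X)$ are handled analogously but much more easily by Poisson summation on $d$: they produce a main term of the same shape when $\ell$ is a square, and are $O(X^{1/2+\epsilon}\ell^{1/2})$ otherwise.

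The main term will come entirely from case (a), and within it from \emph{matched pairings}: $k$-tuples in which the primes split into $k/2$ disjoint pairs of equal primes. The number of such pairings is $(k-1)!! = M_k$ (vanishing for odd $k$), and each contributes $\prod_{\text{pairs}} \frac{a(p)^2}{p}(1+\tfrac{1}{p})^{-1}$ summed over prime choices $p \le x$, the factor $(1+1/p)^{-1}$ arising from the Euler factors in \eqref{2.12}. Since $(1+1/p)^{-1} = 1 + O(1/p)$ and $\sum_p a(p)^2/p^2$ converges, this sum equals $\sum_{p \le x} a(p)^2/p + O(1)$; by the Rankin--Selberg prime number theorem for $\text{sym}^2 E$ (the analyticity and nonvanishing of $L(s,\text{sym}^2 E)$ at $s = 1$), $\sum_{p \le x} a(p)^2/p = \log\log x + O(1)$. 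For $x = X^{1/\log\log\log X}$ this equals $\log\log X + O(\log\log\log X)$, which raised to the $k/2$th power produces the announced factor $(M_k + o(1))(\log\log X)^{k/2}$. Tuples in case (a) in which a prime is repeated $2j \ge 4$ times contribute only $O((\log\log X)^{k/2-1})$, since $\sum_p |a(p)|^{2j}/p^j$ converges.

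It remains to show that cases (b) and (c) are lower order. For \eqref{2.2}, bounding the non-square character sum by $X^{1/2+\epsilon}\ell^{1/2}$ and summing yields a total error $\ll X^{1/2+\epsilon}(\sum_{p\le x}|a(p)|)^k \ll X^{1/2+\epsilon} x^k$, negligible for our choice of $x$. For \eqref{2.3}, case (c) is handled identically using \eqref{2.11}. I expect case (b) to be the principal obstacle: by \eqref{2.13} and the combinatorics of the ``one unpaired prime'' pattern, its total contribution is
\begin{equation*}
\ll \frac{X}{L} \Big(\sum_{q\le x} \frac{|a(q)|\log q}{q}\Big) \Big(\sum_{p\le x} \frac{a(p)^2}{p}\Big)^{(k-1)/2} \ll \frac{X \log x}{L} (\log\log X)^{(k-1)/2},
\end{equation*}
using $|a(q)|\le 2$ and Mertens. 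Since $\log x = (\log X)/\log\log\log X$, this is smaller than the case-(a) main term of size $\sim \frac{X\log X}{L}(\log\log X)^{k/2}$ by a factor $(\log\log\log X)\sqrt{\log\log X} \to \infty$, which furnishes the required $o(1)$ saving (and also matches $M_k = 0$ for odd $k$, since then case (a) is empty and case (b) is the dominant contribution). The main obstacle in the proof is thus the need to track both the leading order of the pairing sum and control this prime-times-square error simultaneously.
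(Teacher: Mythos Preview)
Your proposal is correct and follows essentially the same route as the paper: expand $\mathcal{P}(d;x)^k$, classify tuples by the square-class of $p_1\cdots p_k$, invoke Proposition~\ref{prop2} for the weighted sum \eqref{2.3} (and the corresponding unweighted character-sum bound for \eqref{2.2}), and extract the main term from matched pairings via Rankin--Selberg. Your treatment of the prime-times-square case is in fact slightly more careful than the paper's, correctly comparing the bound $\tfrac{X\log x}{L}(\log\log X)^{(k-1)/2}$ against the main term $\tfrac{X\log X}{L}(\log\log X)^{k/2}$ to exhibit the $o(1)$ saving.
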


\section{Deducing the Theorem from the  main propositions} 
 
We keep the notations introduced in Section 2.  Let $X$ be large, and put $x= X^{1/\log \log \log X}$.  

\begin{lemma} \label{lem3.1}  Let $\alpha <\beta$ be real numbers.   Let $\mathcal{G}_{X}(\alpha, \beta)$ denote the set of discriminants $d\in {\mathcal E}$ 
with $X \le |d| \le 2X$ such that 
$$ 
\frac{{\mathcal P}(d;x)}{\sqrt{\log \log X}} \in (\alpha, \beta), 
$$ 
and such that there are no zeros $\rho_d =\tfrac 12 +i\gamma_d$ of $L(s,E_d)$ with $|\gamma_d| \le (\log X \log \log X)^{-1}$.  
Then, for any $\delta >0$,  
$$ 
| \mathcal{G}_{X}(\alpha, \beta)| \ge \Big( \frac 14- \delta\Big) \Big( \frac{1}{\sqrt{2\pi}} \int_{\alpha}^{\beta} e^{-t^2/2} dt +o(1)\Big) |\{ d\in {\mathcal E}: X \le |d|\le 2X \}.  
 $$ 
 \end{lemma}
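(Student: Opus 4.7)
The plan is to run the method of moments twice via Proposition \ref{prop3}: first in the unweighted family $\mathcal{E}(\kappa,a)$ to see that $\mathcal{P}(d;x)/\sqrt{\log\log X}$ is asymptotically Gaussian, and then against the low-zero weight $W(d) \assign \sum_{\gamma_d} h(\gamma_d L/(2\pi))$ to see that the Gaussian distribution survives this reweighting. A Heath-Brown style observation that any $d$ with a zero in the window $|\gamma_d|\le 1/(\log X\log\log X)$ forces $W(d)\ge 2-o(1)$ will then produce the factor $\tfrac 14$. Throughout I fix an admissible class $(\kappa,a)$; the final passage to $\mathcal{E}$ is a finite sum over such classes, and converting between the smooth weight $\Phi(\kappa d/X)$ and the sharp cutoff $X\le|d|\le 2X$ via sandwich choices of $\Phi$ is routine. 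I take $h$ to be the Fejer kernel \eqref{2.1a} and $L=2(1-\eta)\log X$ with $\eta>0$ small (chosen at the end in terms of $\delta$).

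Writing $\mathcal{A}$ for the set of $d\in\mathcal{E}(\kappa,a)$ with $\mathcal{P}(d;x)/\sqrt{\log\log X}\in(\alpha,\beta)$, equation \eqref{2.2} gives the $k$-th moment of $\mathcal{P}(d;x)/\sqrt{\log\log X}$ (with respect to the measure $\Phi(\kappa d/X)$ on $\mathcal{E}(\kappa,a)$) as $M_k+o(1)$. Approximating $\mathbf{1}_{(\alpha,\beta)}$ from above and below by polynomials of slowly growing degree composed with a smooth tail cutoff (justified by a high even moment bound from \eqref{2.2}), the standard moment-to-distribution sandwich yields
\[
\sum_{d\in\mathcal{A}}\Phi\bigl(\tfrac{\kappa d}{X}\bigr) = (1+o(1))\,G\cdot T,
\]
where $G\assign\tfrac{1}{\sqrt{2\pi}}\int_\alpha^\beta e^{-t^2/2}\,dt$ and $T\assign\sum_{d\in\mathcal{E}(\kappa,a)}\Phi(\kappa d/X)$. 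Equation \eqref{2.3} gives the same moments against the measure $W(d)\Phi(\kappa d/X)$ and identifies the total mass as $\bigl(\tfrac{2\log X}{L}\widehat{h}(0)+\tfrac{h(0)}{2}+o(1)\bigr)T=\bigl(\tfrac{3}{2}+O(\eta)\bigr)T$; repeating the same sandwich argument,
\[
\sum_{d\in\mathcal{A}} W(d)\,\Phi\bigl(\tfrac{\kappa d}{X}\bigr) = (1+o(1))\bigl(\tfrac{3}{2}+O(\eta)\bigr)\, G\cdot T.
\]

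The key arithmetic input is that $W(d)\ge 2-o(1)$ for every $d$ with a zero in the window $|\gamma_d|\le 1/(\log X\log\log X)$. If such a zero is at $\gamma_d=0$, the root number $\epsilon_E(d)=1$ together with the functional equation $\Lambda(s,E_d)=\Lambda(1-s,E_d)$ forces the multiplicity of the zero at $s=\tfrac{1}{2}$ to be even, hence $\ge 2$, contributing $\ge 2h(0)=2$ to $W(d)$. If $\gamma_d\ne 0$, then $-\gamma_d$ is also a zero (since $L(s,E_d)$ has real coefficients), both $\pm\gamma_d$ lie in the window, and $|\gamma_d L/(2\pi)|=O(1/\log\log X)\to 0$ combined with continuity of $h$ at $0$ makes each contribute $h(0)-o(1)=1-o(1)$. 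Letting $\mathcal{B}$ denote the set of such $d$ and using the pointwise nonnegativity of the Fejer kernel,
\[
(2-o(1))\!\!\sum_{d\in\mathcal{A}\cap\mathcal{B}}\!\!\Phi\bigl(\tfrac{\kappa d}{X}\bigr) \le \sum_{d\in\mathcal{A}} W(d)\,\Phi\bigl(\tfrac{\kappa d}{X}\bigr) = \bigl(\tfrac{3}{2}+O(\eta)+o(1)\bigr)G\cdot T,
\]
so the $\Phi$-weighted count of $\mathcal{A}\cap\mathcal{B}$ is at most $\bigl(\tfrac{3}{4}+O(\eta)\bigr)G\cdot T$. Subtracting from the weighted count of $\mathcal{A}$ yields the $\Phi$-weighted lower bound $\bigl(\tfrac{1}{4}-O(\eta)\bigr)G\cdot T$ for $\mathcal{A}\setminus\mathcal{B}$; choosing $\eta$ small in terms of $\delta$, summing over admissible $(\kappa,a)$, and sandwiching $\Phi$ against $\mathbf{1}_{[X,2X]}$ completes the argument.

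The main technical difficulty I anticipate is in the weighted moment step. The error in \eqref{2.3} is $X^{1/2+\epsilon}e^{L/4}\sim X^{1-\eta/2+\epsilon}$, so with $L$ close to $2\log X$ it already encroaches on the main term, forcing the polynomial approximation of $\mathbf{1}_{(\alpha,\beta)}$ to have degree tending to infinity much more slowly than $\log X$, while still being accurate enough that the Gaussian integral $G$ is recovered up to $o(1)$. Matching the degree of the approximation against the error budgets of both \eqref{2.2} and \eqref{2.3}, together with a compatible truncation of $\mathcal{P}(d;x)$ using a high even moment bound, is the only delicate coordination in the argument; the rest is soft.
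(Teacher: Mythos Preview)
Your argument is correct and follows essentially the same route as the paper: apply the two halves of Proposition~\ref{prop3} via the method of moments, take $h$ to be the Fejer kernel with $L$ just below $2\log X$, observe that a zero in the window $|\gamma_d|\le(\log X\log\log X)^{-1}$ forces $W(d)\ge 2-o(1)$ (even parity at $s=\tfrac12$ when $\gamma_d=0$, conjugate pair when $\gamma_d\neq 0$), and subtract.

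One remark on your final paragraph: the difficulty you anticipate is not really there. Proposition~\ref{prop3} asserts \eqref{2.2} and \eqref{2.3} for each \emph{fixed} $k$, and the error $O(X^{1/2+\epsilon}e^{L/4})=O(X^{1-\eta/2+\epsilon})$ is already $o(X)$ for fixed $k$. The classical method of moments (moment convergence implies weak convergence when the limit distribution is determined by its moments, as the Gaussian is) then gives the distributional statement directly, with no need for polynomial approximations of growing degree. So the ``delicate coordination'' you flag does not actually arise.
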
 
 \begin{proof}   Take $\Phi$ to be a smooth approximation to the indicator function of the interval $[1,2]$, and let $\kappa$ and $a \bmod {N_0}$ be as in 
 Section 2.  The first part of Proposition \ref{prop3} (namely \eqref{2.2}) together with the method of moments shows that 
 \begin{equation} 
 \label{3.1} 
 \sum_{\substack{ d\in {\mathcal E}(\kappa, a) \\ {\mathcal P}(d;x)/\sqrt{\log \log X} \in (\alpha, \beta)} } 
 \Phi\Big(\frac{\kappa d}{X}\Big)  = \Big( \frac{1}{\sqrt{2\pi}} \int_{\alpha}^{\beta} e^{-t^2/2} dt +o(1)\Big) \Big( \sum_{d\in {\mathcal E}(\kappa, a)} \Phi \Big(\frac{\kappa d}{X}\Big)\Big). 
 \end{equation} 
 
 Next, take $h$ to be the Fejer kernel given in \eqref{2.1a}, and $L=(2-\delta/2) \log X$.  Then the second part of Proposition \ref{prop3} together with the method of moments 
 shows that 
 \begin{align*} 
  \sum_{\substack{ d\in {\mathcal E}(\kappa, a) \\ {\mathcal P}(d;x)/\sqrt{\log \log X} \in (\alpha, \beta)} }   &\sum_{\gamma_d} h\Big( \frac{\gamma_d L}{2\pi} \Big) 
 \Phi\Big(\frac{\kappa d}{X}\Big)
 = \Big( \frac{1}{\sqrt{2\pi}} \int_{\alpha}^{\beta} e^{-t^2/2} dt +o(1)\Big)  \sum_{d\in {\mathcal E}(\kappa, a)}     \sum_{\gamma_d} h\Big( \frac{\gamma_d L}{2\pi} \Big) \Phi \Big(\frac{\kappa d}{X}\Big) \\
 &= \Big( \frac{1}{\sqrt{2\pi}} \int_{\alpha}^{\beta} e^{-t^2/2} dt +o(1)\Big) \Big( \frac{1}{1-\delta/4} +\frac 12 +o(1)\Big)  \sum_{d\in {\mathcal E}(\kappa, a)} \Phi \Big(\frac{\kappa d}{X}\Big).
 \end{align*}
 Note that the weights $\sum_{\gamma_d} h(\gamma_d L/(2\pi))$ are always non-negative, and if $L(s,E_d)$ has a zero with $|\gamma_d| \le (\log X \log \log X)^{-1}$ 
 then the weight is $\ge 2+o(1)$ (since there would be a complex conjugate pair of such zeros, or a double zero at $\frac 12$).  Combining this with \eqref{3.1}, and summing over 
 all the possibilities for $\kappa$ and $a$, we obtain the lemma. 
  \end{proof} 
  
  \begin{lemma} \label{lem3.2} The number of discriminants $d\in {\mathcal E}$ with $X\le |d|\le 2X$ such that 
  $$ 
  \sum_{(\log X \log \log X)^{-1} \le |\gamma_d|} \log \Big( 1+ \frac {1}{(\gamma_d \log x)^2} \Big) \ge (\log \log \log X)^3
  $$ 
  is $\ll X/\log \log \log X$.  
  \end{lemma}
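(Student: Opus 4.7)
The plan is to apply Markov's inequality to the first moment of the quantity
$$S(d) := \sum_{|\gamma_d| \ge T_0} \log\Bigl(1 + \frac{1}{(\gamma_d \log x)^2}\Bigr), \qquad T_0 := (\log X \log \log X)^{-1}.$$
It suffices to establish the bound $\sum_{d \in \mathcal{E},\, X \le |d| \le 2X} S(d) \ll X \log\log\log X$, since then Markov yields $\ll X/(\log\log\log X)^2$ bad discriminants, well within the claimed bound.

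First I would convert $S(d)$ into an integral of zero counts by Stieltjes partial summation. Writing $f(t) := \log(1 + (t\log x)^{-2})$, one computes $-f'(t) = 2/(t(t^2\log^2 x + 1))$, and since $f(t) N(t;d) \to 0$ as $t \to \infty$ (where $N(t;d) := \#\{\gamma_d : |\gamma_d| \le t\}$), integration by parts yields
$$S(d) \le \int_{T_0}^\infty \frac{2\, N(t;d)}{t(t^2 \log^2 x + 1)} \, dt.$$
For the zero counts I would take $h$ to be the Fejer kernel from \eqref{2.1a}, so $h(y) \ge c_0 > 0$ for $|y| \le \tfrac12$ and $\widehat h$ is supported in $[-1,1]$. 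Setting $L_t := \pi/t$, this gives $N(t;d) \le c_0^{-1} \sum_{\gamma_d} h(\gamma_d L_t/(2\pi))$ whenever $L_t \le 2\log X$. For the smaller range $t < \pi/(2\log X)$, where $L_t$ would exceed the constraint in Proposition \ref{prop2}, I would instead monotonize by using the fixed cap $L_\star := 2\log X$, so that $N(t;d) \le N(\pi/L_\star; d) \le c_0^{-1} \sum_{\gamma_d} h(\gamma_d L_\star/(2\pi))$. Applying Proposition \ref{prop2}, equation \eqref{2.12}, with $\ell = 1$, and summing over the finitely many $(\kappa,a)$, I get the uniform estimate
$$\sum_{\substack{d \in \mathcal{E} \\ X \le |d| \le 2X}} N(t; d) \ll X \bigl(1 + t \log X\bigr).$$

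Substituting this into the integral and splitting the range at $t = 1/\log X$ and $t = 1/\log x$, the contribution splits into three pieces. In the range $T_0 \le t \le 1/\log X$, the integrand is comparable to $X/t$, contributing $\ll X \log((1/\log x)/T_0) = X \log(\log\log X \log\log\log X) \ll X \log\log\log X$. In the range $1/\log X \le t \le 1/\log x$, the integrand is comparable to $X\log X$, contributing $\ll X\log X \cdot (1/\log x) = X \log\log\log X$. In the range $t > 1/\log x$, the integrand decays like $X \log X/(t^2 \log^2 x)$, and integration gives $\ll X \log X/\log x = X \log\log\log X$. Combining these yields $\sum_d S(d) \ll X \log\log\log X$, and the lemma follows from Markov's inequality.

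The main technical point is the scale mismatch in the second step: the Fejer parameter $L_t = \pi/t$ naturally wants to range over $[\pi, \pi \log X \log\log X]$, but Proposition \ref{prop2} is only effective up to $L \lesssim 2\log X$. The remedy is simply to cap $L_t$ at this threshold, which costs nothing except that at very small scales $t \lesssim 1/\log X$ the bound on $N(t;d)$ ceases to shrink with $t$; fortunately the weight $1/(t(t^2 \log^2 x+1))$ produces only a logarithmic integral in that regime, so the final bound is only $\log\log\log X$ (and not, say, $(\log\log\log X)^2$ as a naive dyadic argument might give).
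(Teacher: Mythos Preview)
Your approach is correct and reaches the same conclusion, though by a somewhat different route than the paper. Both arguments bound the first moment $\sum_d S(d)$ and then apply Markov, and both feed in Proposition~\ref{prop2} (with $\ell=1$ and the Fej\'er kernel) as the arithmetic input. The difference is in how the zero--sum is unpacked. The paper works at the single scale $L\asymp\log x$: it integrates the Fej\'er weight over $L\in[\log x,2\log x]$ to produce $\min(1,(\gamma_d\log x)^{-2})$ directly, and then uses the pointwise inequality $\log(1+(\gamma_d\log x)^{-2})\ll(\log\log\log X)\min(1,(\gamma_d\log x)^{-2})$ valid for $|\gamma_d|\ge T_0$, obtaining $\sum_d S(d)\ll X(\log\log\log X)^2$. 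You instead integrate by parts in $t$ and apply Proposition~\ref{prop2} at \emph{every} scale $L_t=\pi/t$ (capped near $2\log X$), which yields the slightly sharper $\sum_d S(d)\ll X\log\log\log X$. Your argument is more systematic and loses one fewer factor of $\log\log\log X$; the paper's is shorter and avoids varying the scale.

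Two small points to tidy. First, cap $L_\star$ at $(2-\delta)\log X$ rather than exactly $2\log X$, so that the error $X^{1/2+\epsilon}e^{L/4}$ in \eqref{2.12} is genuinely dominated by the main term. Second, for $t>\pi$ (where $L_t<1$ falls outside the hypothesis of Proposition~\ref{prop2}) you should say a word: the crude Riemann--von Mangoldt bound $N(t;d)\ll t\log(|d|t)$ is more than sufficient there, since your weight decays like $t^{-2}$.
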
 
  \begin{proof}  Applying Proposition \ref{prop2} with $\ell=1$, $h$ given as in \eqref{2.1a}, and $1\le L\le (2-\delta) \log X$, we obtain (after summing over the possibilities 
  for $\kappa$ and $a$)
 $$ 
  \sum_{\substack {d\in {\mathcal E} \\ X \le |d|\le 2X}} \sum_{\gamma_d} \Big( \frac{\sin(\gamma_d L/2)}{\gamma_d L/2}\Big)^2   \ll X \frac{\log X}{L}. 
$$ 
  Integrate both sides of this estimate over $L$ in the range $\log x \le L \le 2 \log x$.  Since, for any $y>0$ and $t \neq 0$, 
  $$ 
\frac{1}{y}  \int_{y}^{2y} \Big( \frac{\sin(\pi t u)}{\pi t u}\Big)^2 du \gg \min\Big( 1, \frac{1}{(ty)^2}\Big), 
$$ 
we obtain 
$$ 
\sum_{\substack {d\in {\mathcal E} \\ X \le |d|\le 2X}} \sum_{\gamma_d} \min \Big( 1, \frac{1}{(\gamma_d \log x)^2}\Big) \ll X \frac{\log X}{\log x} = X \log \log \log X. 
$$ 
Now if $|\gamma_d| \ge (\log X \log \log X)^{-1}$ then 
$$ 
\log \Big( 1+ \frac{1}{(\gamma_d \log x)^2}\Big) \ll (\log \log \log X) \min \Big(1 , \frac{1}{(\gamma_d \log x)^2}\Big), 
$$ 
and therefore we may conclude that 
$$ 
\sum_{\substack {d\in {\mathcal E} \\ X \le |d|\le 2X}}   \sum_{(\log X \log \log X)^{-1} \le |\gamma_d|} \log \Big( 1+ \frac {1}{(\gamma_d \log x)^2} \Big) 
\ll X (\log \log \log X)^2. 
$$ 
The lemma follows at once. 
  \end{proof}   
 
 With these results  in place, it is now a simple matter to deduce the main theorem.  By Proposition 1 \ref{prop1} we know that for $d \in {\mathcal E}$ with $X\le |d|\le 2X$
  $$
  \log L(\tfrac 12, E_d) = {\mathcal P}(d;x) - \tfrac 12 \log \log X + O(\log \log \log X) + 
  O\Big( \sum_{\gamma_d} \log \Big( 1+\frac{1}{(\gamma_d \log x)^2}\Big)\Big). 
  $$ 
  Lemma \ref{lem3.1} tells us that for $d\in \mathcal{G}_{X}(\alpha, \beta)$ we may arrange for ${\mathcal P}(d;x)/\sqrt{\log \log X}$ to lie in the interval $(\alpha, \beta)$ 
  and for there to be no zeros with $|\gamma_d| \le (\log X \log \log X)^{-1}$.   Lemma \ref{lem3.2} now allows us to discard $\ll X/\log \log \log X$ elements of 
  $\mathcal{G}_{X}(\alpha, \beta)$ so as to ensure that the contribution of zeros with $|\gamma_d|\ge (\log X\log \log X)^{-1}$ is $O((\log \log \log X)^3)$.  Thus there are 
  $$ 
  \ge  \Big( \frac 14- \delta\Big) \Big( \frac{1}{\sqrt{2\pi}} \int_{\alpha}^{\beta} e^{-t^2/2} dt +o(1)\Big) |\{ d\in {\mathcal E}: X \le |d|\le 2X \}, 
  $$ 
  fundamental discriminants $d\in {\mathcal E}$ with $X \le |d|\le 2X$ for which 
  $$ 
\frac{  \log L(\frac 12, E_d) + \frac 12\log \log X}{\sqrt{\log \log X}} + O\Big( \frac{(\log \log \log X)^3}{\sqrt{\log \log X}}\Big) \in (\alpha, \beta),
$$ 
which completes the proof.

 \section{Proof of Proposition \ref{prop1}} 
 
 A straight-forward adaptation of Lemma 1 from \cite{Sou1} (itself based on an identity of Selberg) shows that for any 
 $\sigma \ge \frac 12$ with $L(\sigma, E_d) \neq 0$, and any $x\ge 3$ one has  
 \begin{equation} 
 \label{4.1} 
 -\frac{L^{\prime}}{L}(\sigma, E_d) = \sum_{n\le x} \frac{\Lambda_E(n)}{n^{\sigma}} \chi_d(n) \frac{\log (x/n)}{\log x} + 
 \frac{1}{\log x} \Big( \frac{L^{\prime}}{L}\Big)^{\prime}(\sigma, E_d) + \frac{1}{\log x} \sum_{\rho_d} \frac{x^{\rho_d-\sigma}}{(\rho_d -\sigma)^2} 
 + O\Big( \frac{1}{x^{\sigma} \log x} \Big). 
 \end{equation} 
 Here $\rho_d$ runs over the non-trivial zeros of $L(s,E_d)$, and this identity in fact holds unconditionally.  
 
 Now assume GRH for $L(s,E_d)$ and write $\rho_d =\tfrac 12 +i\gamma_d$.  If $L(\tfrac 12, E_d) \neq 0$, then integrating both sides of 
 \eqref{4.1} from $\tfrac 12$ to $\infty$ yields 
 \begin{align}\label{4.2} 
 \log L(\tfrac 12, E_d) = \sum_{n\le x} \frac{\Lambda_E(n)}{\sqrt{n}\log n} \chi_d(n)& \frac{\log (x/n)}{\log x} - 
 \frac{1}{\log x} \frac{L^{\prime}}{L}(\tfrac 12, E_d)\nonumber \\
 &+ \frac{1}{\log x} \sum_{\gamma_d} \text{Re} \int_{\frac 12}^{\infty} \frac{x^{\rho_d-\sigma}}{(\rho_d-\sigma)^2} d\sigma 
 + O\Big( \frac{1}{\sqrt{x} (\log x)^2}\Big). 
 \end{align} 
 We may restrict attention to the real part of the integral above since all the other terms involved are real, or noting that the 
 zeros $\rho_d$ appear in conjugate pairs.
 
 Consider first the sum over $n$ in \eqref{4.2}.  The contribution from prime powers $n=p^k$ with $k\ge 3$ is plainly $O(1)$.  
 The contribution of the terms $n=p$ is ${\mathcal P}(d;x)+O(1)$, where the error term $O(1)$ arises from the primes dividing $N_0$.   Finally, by Rankin--Selberg theory 
 (see for instance \cite{IK}) it follows that 
 \begin{equation} 
 \label{4.3} 
 \sum_{\substack{ p\le y\\ p\nmid N_0}} \frac{(\alpha_p^2+\overline{\alpha_p}^2) \log p}{p} = \sum_{\substack{ p\le y \\ p\nmid N_0}} \frac{(a(p)^2-2)\log p}{p} = - \log y+ O(1), 
 \end{equation} 
 so that, by partial summation, 
 the contribution of the terms $n=p^2$ equals 
 $$
 \sum_{\substack{p\le \sqrt{x}\\ p\nmid N_0}} \frac{(\alpha_p^2 +\overline{\alpha_p}^2)}{2p} \frac{\log (x/p^2)}{\log x} +O(1) 
 = \sum_{\substack{ p\le \sqrt{x} \\ p\nmid N_0}} \frac{a(p)^2 -2}{2p} \frac{\log (x/p^2)}{\log x} +O(1)= -\frac 12 \log \log x + O(1). 
 $$ 
   Thus the contribution of the sum over $n$ in \eqref{4.2} is 
 \begin{equation} 
 \label{4.4} 
 {\mathcal P}(d;x) -\tfrac 12 \log \log x + O(1). 
 \end{equation}

 Next we turn to the sum over zeros in \eqref{4.2}.  If $|\gamma_d \log x| \ge 1$, then note that 
 $$ 
 \int_{\frac 12}^{\infty} \frac{x^{\rho_d -\sigma}}{(\rho_d-\sigma)^2} d\sigma 
 = O\Big( \frac{1}{\gamma_d^2} \int_{\frac 12}^{\infty} x^{\frac 12-\sigma} d\sigma \Big) 
 = O\Big( \frac{1}{\gamma_d^2 \log x}\Big) = O\Big( \log x \log \Big(1+ \frac{1}{\gamma_d^2 (\log x)^2}\Big)\Big). 
 $$ 
 If $|\gamma_d \log x| \le 1$, then we split into the ranges $\tfrac 12 \le \sigma \le \tfrac 12+\frac{1}{\log x}$ and 
 larger values of $\sigma$.  The first range contributes 
 \begin{align*} 
 \int_{\frac 12}^{\frac 12 + \frac{1}{\log x}} \text{Re} \frac{x^{\rho_d-\sigma}}{(\rho_d-\sigma)^2} d\sigma 
& = \int_{\frac 12}^{\frac 12 +\frac{1}{\log x}} \text{Re} \Big( \frac{1}{(\rho_d-\sigma)^2} + \frac{\log x}{(\rho_d-\sigma)} + O((\log x)^2)\Big) d\sigma 
 \\
 &= \text{Re} \Big(- \frac{1}{i\gamma_d}- \frac{1}{1/\log x -i\gamma_d} + \log x \log \frac{-i\gamma_d}{1/\log x -i\gamma_d} + O(\log x)\Big)\\ 
 &=O\Big( \log x \log \Big(1+ \frac{1}{\gamma_d^2 (\log x)^2}\Big)\Big),
 \end{align*}
 while the second range contributes 
 $$ 
\ll \int_{\frac 12+\frac{1}{\log x}}^{\infty} \frac{x^{\frac 12-\sigma}}{(\frac 12-\sigma)^2} d\sigma \ll \log x = O\Big( \log x \log \Big(1+ \frac{1}{\gamma_d^2 (\log x)^2}\Big)\Big).
$$ 
Thus in all cases the sum over zeros in \eqref{4.2} is 
\begin{equation} 
\label{4.5} 
O\Big( \log x \log \Big(1+ \frac{1}{\gamma_d^2 (\log x)^2}\Big)\Big).
\end{equation}

 Finally, taking logarithmic derivatives in the functional equation we find that 
 $$ 
 \frac{L^{\prime}}{L} (\tfrac 12, E_d) = -\log (\sqrt{N} |d|) + O(1). 
 $$ 
 The proposition follows upon combining this with \eqref{4.2},  \eqref{4.4}, and \eqref{4.5}. 
 
\section{Proof of Proposition \ref{prop2}} 

The proof of Proposition \ref{prop2} is based on the explicit formula, which we first recall in our context.  

\begin{lemma} 
\label{expformula}   Let $h$ be a function with $h(x) \ll (1+x^2)^{-1}$ and with compactly supported Fourier transform ${\widehat h}(\xi) = 
\int_{-\infty}^{\infty} h(t) e^{-2\pi i\xi t} dt$.   Then, for any fundamental discriminant $d \in {\mathcal E}$ 
\begin{align*} 
\sum_{\gamma_d} h\Big(\frac{\gamma_d}{2\pi} \Big) =\frac{1}{2\pi}   \int_{-\infty}^{\infty} h\Big( \frac{t}{2\pi} \Big)& \Big( \log \frac{Nd^2}{(2\pi)^2} + 2\text{Re} \frac{\Gamma^{\prime}}{\Gamma} (1+it) \Big) dt   \\
&- \sum_{n} \frac{\Lambda_E(n)}{\sqrt{n}} \chi_{d}(n) \Big( {\widehat h}(\log n) + {\widehat h}(-\log n)\Big), 
\end{align*}
where the sum is over all ordinates of non-trivial zeros $1/2+ i\gamma_d$ of $L(s,E_d)$.   
\end{lemma}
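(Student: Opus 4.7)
The plan is to derive the explicit formula by the classical contour integration of $\Lambda'/\Lambda(s, E_d)$ against the test function $g(u) := h(u/(2\pi))$. Since $\widehat{h}$ is compactly supported, Paley--Wiener guarantees that $g$ extends to an entire function of exponential type, which together with the hypothesis $h(x) \ll (1+x^2)^{-1}$ provides the decay needed to justify every contour manipulation below.

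First I would apply the residue theorem to
$$
\frac{1}{2\pi i}\oint g\Big(\frac{s-\tfrac12}{i}\Big)\frac{\Lambda'}{\Lambda}(s,E_d)\, ds
$$
on a positively-oriented rectangle with vertical sides at $\mathrm{Re}(s) = -\tfrac12 - \delta$ and $\mathrm{Re}(s) = \tfrac32 + \delta$ and horizontal sides at heights $\pm T$. Because $\Lambda(s,E_d)$ is entire of order one with zeros exactly at the non-trivial zeros $\tfrac12 + i\gamma_d$, each contributes residue $h(\gamma_d/(2\pi))$, so as $T\to\infty$ the contour integral equals the desired sum $\sum_{\gamma_d} h(\gamma_d/(2\pi))$; the horizontal segments are absorbed into the error via standard average-value estimates for $\Lambda'/\Lambda$ combined with the decay of $g$.

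Next, on the right vertical line I would use the decomposition
$$
\frac{\Lambda'}{\Lambda}(s,E_d) = \log\Big(\frac{\sqrt{N}|d|}{2\pi}\Big) + \frac{\Gamma'}{\Gamma}\Big(s+\tfrac12\Big) + \frac{L'}{L}(s,E_d).
$$
On $\mathrm{Re}(s) = \tfrac32 + \delta$ the Dirichlet series $-L'/L(s,E_d) = \sum_n \Lambda_E(n)\chi_d(n) n^{-s}$ converges absolutely, so after interchanging sum and integral the substitution $s = \tfrac12 + iu$ and the identity $\int g(u) e^{-iu\log n}\, du = 2\pi\widehat{h}(\log n)$ yield the contribution $-\sum_n \Lambda_E(n)\chi_d(n) n^{-1/2}\widehat{h}(\log n)$. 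The constant term and $\Gamma'/\Gamma(s+\tfrac12)$ are holomorphic throughout $\mathrm{Re}(s) > -\tfrac12$, and shifting their contour onto $\mathrm{Re}(s) = \tfrac12$ gives the piece $\frac{1}{2\pi}\int h(t/(2\pi))\bigl(\log(\sqrt{N}|d|/(2\pi)) + \Gamma'/\Gamma(1+it)\bigr)\, dt$.

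For the left vertical line I would invoke the functional equation $\Lambda(s,E_d) = \epsilon_E(d)\Lambda(1-s,E_d)$, which implies $\Lambda'/\Lambda(s,E_d) = -\Lambda'/\Lambda(1-s,E_d)$. The change of variables $s \mapsto 1-s$ transports this integral onto $\mathrm{Re}(s) = \tfrac32 + \delta$ with $g((s-\tfrac12)/i)$ replaced by $g(-(s-\tfrac12)/i)$, so repeating the same three-way split produces the companion pieces $-\sum_n \Lambda_E(n)\chi_d(n) n^{-1/2}\widehat{h}(-\log n)$ and $\frac{1}{2\pi}\int h(t/(2\pi))\bigl(\log(\sqrt{N}|d|/(2\pi)) + \Gamma'/\Gamma(1-it)\bigr)\, dt$. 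Summing the two archimedean integrals and invoking $2\log(\sqrt{N}|d|/(2\pi)) = \log(Nd^2/(2\pi)^2)$ and $\Gamma'/\Gamma(1+it) + \Gamma'/\Gamma(1-it) = 2\,\mathrm{Re}\,\Gamma'/\Gamma(1+it)$ recovers the stated right-hand side.

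The main obstacle is not algebraic but analytic: justifying the vanishing of the horizontal segments as $T \to \infty$ and the various contour shifts requires controlling $g$ on horizontal lines (standard for entire functions of exponential type of a Paley--Wiener class test function), together with Stirling's estimate for $\Gamma'/\Gamma$ and the classical $O(\log|s|)$ bound for $L'/L$ between zeros. A minor bookkeeping point is that although the individual pieces $\Gamma'/\Gamma(s+\tfrac12)$ and $L'/L(s,E_d)$ have poles and trivial zeros at $s = -\tfrac12 - k$, these cancel in $\Lambda'/\Lambda$, and because the split is invoked only inside $\mathrm{Re}(s) > -\tfrac12$ this cancellation is never tested.
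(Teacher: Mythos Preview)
Your derivation via contour integration of $\Lambda'/\Lambda(s,E_d)$ against a Paley--Wiener test function is the standard proof of the explicit formula, and the details you sketch (residues at the non-trivial zeros, the Dirichlet series on the right edge, the functional equation to reflect the left edge, and the combination $\Gamma'/\Gamma(1+it)+\Gamma'/\Gamma(1-it)=2\,\mathrm{Re}\,\Gamma'/\Gamma(1+it)$) are correct.

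There is nothing to compare against in the paper: the authors state this lemma without proof, treating it as a known result, and immediately pass to the dilated test function $h_L(x)=h(xL)$ to obtain \eqref{5.1}. Your write-up is precisely the classical argument one would give to justify the quoted formula.
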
  

Applying the explicit formula to the dilated function $h_L(x) = h(xL)$ whose Fourier transform is $\frac 1L {\widehat h}(x/L)$, we 
obtain 
\begin{align} 
\label{5.1} 
\sum_{\gamma_d} h\Big(\frac{\gamma_d L}{2\pi} \Big) =\frac{1}{2\pi}   \int_{-\infty}^{\infty} h\Big( \frac{tL}{2\pi} \Big)& \Big( \log \frac{Nd^2}{(2\pi)^2} + 2\text{Re} \frac{\Gamma^{\prime}}{\Gamma} (1+it) \Big) dt \nonumber  \\
&- \frac{1}{L} \sum_{n} \frac{\Lambda_E(n)}{\sqrt{n}} \chi_{d}(n) \Big( {\widehat h}\Big(\frac{\log n}{L}\Big) + {\widehat h}\Big(-\frac{\log n}{L}\Big)\Big).
\end{align}
We multiply this expression by $\chi_d(\ell)$ and sum over $d$ with suitable weights.  Thus we find 
\begin{equation} 
\label{5.2} 
\sum_{d\in {\mathcal E}(\kappa, a)} \sum_{\gamma_d} h\Big(\frac{\gamma_d L}{2\pi} \Big) \chi_d(\ell) \Phi\Big( \frac{\kappa d}{X}\Big) = S_1 - S_2, 
\end{equation} 
where 
\begin{equation} 
\label{5.3} 
S_1 = \frac{1}{2\pi} \int_{-\infty}^{\infty} h\Big( \frac{tL}{2\pi}\Big) \sum_{d\in {\mathcal E}(\kappa,a)} \chi_d(\ell) \Big( \log \frac{Nd^2}{(2\pi)^2} + 2\text{Re }\frac{\Gamma^{\prime}}{\Gamma}(1+it)\Big) \Phi\Big(\frac{\kappa d}{X}\Big)dt, 
\end{equation} 
and 
\begin{equation} 
\label{5.4} 
S_2 = \frac{1}{L} \sum_{n} \frac{\Lambda_E(n)}{\sqrt{n}} \Big( {\widehat h}\Big(\frac{\log n}{L}\Big) + {\widehat h}\Big(-\frac{\log n}{L}\Big)\Big) \sum_{d\in {\mathcal E}(\kappa,a)} \chi_d(\ell n) 
\Phi\Big( \frac{\kappa d}{X}\Big). 
\end{equation}

The term $S_1$ is relatively easy to handle.  If $\ell$ is a square, it amounts to counting square-free integers $d$ lying in a suitable progression $\bmod {N_0}$ and coprime to $\ell$.  While if $\ell$ is not a square, the resulting sum is a non-trivial character sum, which exhibits substantial cancellation.   A more general term of this type is handled in Proposition 1 of \cite{RaSo1}, which we refer to for a detailed proof.  Thus when $\ell$ is not a square we find 
\begin{equation} 
\label{5.5} 
S_1 = O(X^{\frac 12+\epsilon} \sqrt{\ell}), 
\end{equation} 
while if $\ell$ is a square
\begin{align} 
\label{5.6} 
S_1 &= \frac{X}{N_0} \prod_{p|\ell} \Big(1+\frac 1p\Big)^{-1}\prod_{p\nmid N_0} \Big(1-\frac{1}{p^2}\Big) {\check \Phi}(0) (2\log X+ O(1)) \frac{{\widehat h}(0)}{L} 
+ O(X^{\frac 12+\epsilon} \sqrt{\ell}). 
\end{align} 

We now turn to the more difficult term $S_2$.  First we dispose of terms $n$ (which we may suppose is a prime power) 
that have a common factor with $N_0$.  Note that since $d$ is fixed in a residue class $\bmod {N_0}$, if $n$ is the power of a prime dividing $N_0$ then $\chi_d(n)$ is determined by 
the congruence condition on $d$.   Thus the contribution of these terms is 
\begin{equation} 
\label{5.7} 
\ll \frac{1}{L} \sum_{(n,N_0) >1} \frac{\Lambda(n)}{\sqrt{n}} \Big| \sum_{d\in {\mathcal E}(\kappa, a)} \chi_d(\ell) \Phi\Big(\frac{\kappa d}{X}\Big) \Big| \ll 
\delta(\ell=\square) \frac{X}{L} + X^{\frac 12+\epsilon} \sqrt{\ell}, 
\end{equation} 
where $\delta(\ell =\square)$ denotes $1$ when $\ell$ is a square, and $0$ otherwise.

Henceforth we restrict attention to the terms in $S_2$ where $(n,N_0)= 1$.  Note that if $d\equiv a \bmod {N_0}$ then $d$ is automatically $1 \bmod 4$, and the condition that 
$d$ is a fundamental discriminant amounts to $d$ being square-free.  We express the square-free condition by M{\" o}bius inversion $\sum_{\alpha^2 |d} \mu(\alpha)$, and then 
split the sum into the cases where $\alpha >A$ is large, and when $\alpha \le A$ is small, for a suitable parameter $A\le X$.  We first handle the case when $\alpha > A$ is large.  These terms give 
\begin{align} 
\label{5.8} 
&\sum_{\alpha> A} \mu(\alpha) \sum_{\substack{ d\equiv a \bmod N_0 \\ \alpha^2 | d} }\Phi\Big(\frac{\kappa d}{X}\Big) \frac{1}{L} \sum_{(n,N_0)=1} \frac{\Lambda_E(n)}{\sqrt{n}} 
\Big( {\widehat h}\Big( \frac{\log n}{L} \Big) + {\widehat h} \Big( -\frac{\log n}{L}\Big) \Big) \chi_d(\ell n) \nonumber \\ 
&\ll \sum_{\alpha >A} \sum_{\substack{ d\equiv a \bmod N_0 \\ \alpha^2 | d} }\Phi\Big(\frac{\kappa d}{X}\Big) (\log X) \ll \frac{X}{N_0 A} \log X, 
\end{align} 
upon using GRH to estimate the sum over $n$ and then estimating the sum over $d$ trivially. 

We are left with the terms with $\alpha \le A$, and writing $d=k\alpha^2$ we may express these terms as 
\begin{equation} 
\label{5.9}
\frac 1L \sum_{(n,N_0)=1} \frac{\Lambda_E(n)}{\sqrt{n}} \Big( {\widehat h}\Big(\frac{\log n}{L}\Big) + {\widehat h}\Big(-\frac{\log n}{L}\Big)\Big)  
\sum_{\substack{\alpha \le A \\ (\alpha, n\ell N_0)=1} } \mu(\alpha) \sum_{\substack{ k \equiv a \overline{\alpha^2} \bmod{N_0} }} \Big( \frac{k}{\ell n}\Big) \Phi \Big( \frac{\kappa k \alpha^2}{X}\Big). 
\end{equation} 
We now apply the Poisson summation formula to the sum over $k$ above, as in Lemma 7 of \cite{RaSo1}.  This transforms the sum over $k$ above to 
\begin{equation} 
\label{5.10} 
\frac{X}{n\ell N_0 \alpha^2} \Big(\frac{N_0}{n\ell}\Big) \sum_{v} e\Big( \frac{va\overline{\alpha^2 n\ell}}{N_0}\Big) \tau_v(n\ell) {\widehat \Phi}\Big( \frac{Xv}{n\ell \alpha^2 N_0}\Big), 
\end{equation} 
where $\tau_v(n\ell)$ is a Gauss sum given by 
$$ 
\tau_v(n\ell) = \sum_{b\bmod {n\ell}} \Big(\frac{b}{n\ell}\Big) e\Big( \frac{v b}{n\ell} \Big). 
$$ 
The Gauss sum $\tau_v(n\ell)$ can be described explicitly, see Lemma 6 of \cite{RaSo1} which gives an evaluation of 
$$
G_v(n\ell) = \Big( \frac{1-i}{2} + \Big(\frac{-1}{n\ell}\Big) \frac{1+i}{2}\Big) \tau_v(n\ell), 
$$ 
from which $\tau_v(n\ell)$ may be obtained via 
\begin{equation} 
\label{5.10a} 
\tau_v(n\ell) =  \Big( \frac{1+i}{2} + \Big(\frac{-1}{n\ell}\Big) \frac{1-i}{2}\Big) G_v(n\ell).
\end{equation}  

The term $v=0$ in \eqref{5.10} leads to a main term; we postpone its treatment, and first consider the contribution of terms $v\neq 0$.  Since 
${\widehat h}$ is supported in $[-1,1]$, we may suppose that $n\le e^L$.  The rapid decay of the Fourier transform ${\widehat \Phi}(\xi)$ allows 
us to restrict attention to the range $|v| \le \ell e^L A^2 X^{-1+\epsilon}$, with the total contribution to $S_2$ of terms with larger $|v|$ being estimated by $O(1)$.  
For the smaller values of $v$, we interchange the sums over $v$, performing first the sum over $n$ using GRH.  Thus these terms contribute 
\begin{align*} 
\frac{X}{\ell L N_0 }& \sum_{0< |v| \le \ell e^{L}A^2 X^{-1+\epsilon}} \sum_{\substack{\alpha \le A \\ (\alpha, \ell N_0)=1}} \frac{\mu(\alpha)}{\alpha^2} \\
&\sum_{(n,\alpha N_0)=1} \frac{\Lambda_E(n)}{n\sqrt{n}} \Big(\frac{N_0}{n\ell}\Big) e\Big( \frac{va \overline{\alpha^2n\ell}}{N_0}\Big) \tau_v(n\ell) 
 \Big( {\widehat h}\Big(\frac{\log n}{L}\Big) + {\widehat h}\Big(-\frac{\log n}{L}\Big)\Big)  {\widehat \Phi}\Big( \frac{Xv}{n\ell \alpha^2 N_0}\Big). 
\end{align*} 
We now claim that (on GRH) the sum over $n$ above is 
\begin{equation} 
\label{5.11} 
\ll \frac{\alpha \ell^{\frac 32}}{\sqrt{X|v|}} X^{\epsilon}, 
\end{equation} 
so that the contribution of the terms with $v\neq 0$ is 
\begin{equation} 
\label{5.12} 
\ll X^{\frac 12 +\epsilon} \ell^{\frac 12} \sum_{1\le |v| \le  \ell e^{L}A^2 X^{-1+\epsilon}} |v|^{-\frac 12} \log A \ll \ell e^{L/2} A X^{\epsilon}.
\end{equation} 
To minimize the combined contributions of the error terms in \eqref{5.12} and \eqref{5.8}, we shall choose $A= (X/\ell)^{\frac 12} e^{-\frac L4}$, so that the effect of 
both these error terms is 
\begin{equation} 
\label{5.13} 
\ll X^{\frac 12+\epsilon} \ell^{\frac 12} e^{\frac L4}.
\end{equation} 

To justify the claim \eqref{5.11} we first use \eqref{5.10a} to replace $\tau_v(n\ell)$ by $G_v(n\ell)$ so that we must bound (for both choices of $\pm$)
$$ 
\sum_{(n,\alpha N_0)=1} \frac{\Lambda_E(n)}{n\sqrt{n}} \Big(\frac{\pm N_0}{n\ell}\Big) e\Big( \frac{va \overline{\alpha^2n\ell}}{N_0}\Big) G_v(n\ell) 
 \Big( {\widehat h}\Big(\frac{\log n}{L}\Big) + {\widehat h}\Big(-\frac{\log n}{L}\Big)\Big)  {\widehat \Phi}\Big( \frac{Xv}{n\ell \alpha^2 N_0}\Big).
 $$
 First consider the generic case when $n$ is a prime power with $(n,v)=1$.  Here (using Lemma 6 of \cite{RaSo1}) $G_v(n\ell) =0$ unless $n$ is a prime $p$ not dividing $\ell$ in which case $G_v(p\ell) =  (\frac{v}{p}) \sqrt{p} G_v(\ell)$.  
 Thus such terms contribute to the above 
 $$ 
 \Big( \frac{\pm N_0}{\ell}\Big) G_v(\ell) \sum_{p\nmid \alpha v \ell N_0} \frac{\Lambda_E(p)}{p} \Big(\frac{\pm vN_0}{p}\Big)  e\Big( \frac{va \overline{\alpha^2p\ell}}{N_0}\Big) \Big( {\widehat h}\Big(\frac{\log p}{L}\Big) + {\widehat h}\Big(-\frac{\log p}{L}\Big)\Big)  {\widehat \Phi}\Big( \frac{Xv}{p\ell \alpha^2 N_0}\Big)
 $$
The rapid decay of ${\widehat \Phi}(\xi)$ implies that we may restrict attention above to the range $p> X^{1-\epsilon} |v|/(\ell \alpha^2 N_0)$.  Then splitting $p$ into 
progressions $\bmod N_0$ and using GRH (it is here that we need GRH for twists of $L(s,E)$ by quadratic characters, as well as all Dirichlet characters modulo $N_0$) we obtain the 
bound 
$$ 
\ll |G_v(\ell)| \frac{X^{\epsilon} \ell^{\frac 12} \alpha N_0}{\sqrt{X |v|}} \ll \frac{\ell^{\frac 32} \alpha X^{\epsilon}}{\sqrt{X |v|}},  
$$  
which is in keeping with \eqref{5.11}.   Now consider the non-generic case when $n$ is the power of some prime dividing $v$.  We may assume that $n|v^2$ (else $G_v(n\ell)=0$ by Lemma 6 of \cite{RaSo1}) and also that 
$n\ge X^{1-\epsilon} |v|/(\ell \alpha^2 N_0)$ else the Fourier transform $\widehat \Phi$ is negligible.  Using that $|G_v(n\ell)| \le (v,n\ell)^{\frac 12} (n\ell)^{\frac 12} \le (|v| n\ell)^{\frac 12}$ (which again follows from Lemma 6 of \cite{RaSo1}) we may bound the contribution of these terms by 
$$ 
\ll \sum_{n|v^2} \Lambda(n) \frac{(|v|\ell)^{\frac 12}}{(X^{1-\epsilon} v/(\ell \alpha^2 N_0))} \ll (\log v) X^{\epsilon} \frac{\ell^{\frac 32} \alpha^2}{X \sqrt{|v|}} 
\ll  \frac{\ell^{\frac 32} \alpha X^{\epsilon}}{\sqrt{X |v|}},
$$
since $\log v \ll \log X \ll X^{\epsilon}$ and $\alpha \le A \le \sqrt{X}$.  Thus these terms also satisfy the claimed bound \eqref{5.11}.

Now we handle the main term contribution from $v=0$, noting that $\tau_0(n\ell) =0$ unless $n\ell$ is a square, in which case it equals $\phi(n\ell)$.  Thus the 
main term contribution from $v=0$ is 
$$ 
\frac{X}{L N_0} \sum_{\substack{(n,N_0)=1 \\ n\ell = \square}} \frac{\Lambda_E(n)}{\sqrt{n}} \frac{\phi(n\ell)}{n\ell} \Big(\sum_{\substack {\alpha \le A \\ (\alpha, \ell n N_0)=1}}
\frac{\mu(\alpha)}{\alpha^2}\Big) {\widehat \Phi}(0)  \Big( {\widehat h}\Big(\frac{\log n}{L}\Big) + {\widehat h}\Big(-\frac{\log n}{L}\Big)\Big).
$$
Thus this main term only exists if $\ell$ is a square (so that $n$ is a square), or if $\ell$ is $q$ times a square for a unique prime $q$ (so that $n$ is an odd 
power of $q$).   In the case $\ell$ is a square, writing $n=m^2$ and performing the sum over $\alpha$,  we obtain that the main term is 
$$ 
\frac{X}{L N_0} {\widehat \Phi}(0) \sum_{(m,N_0)=1} \frac{\Lambda_E(m^2)}{m}\Big( \prod_{p|m\ell} \Big(1+\frac 1p\Big)^{-1}  \prod_{p\nmid N_0} \Big(1-\frac 1{p^2}\Big) + 
O\Big( \frac{1}{A}\Big) \Big) \Big( {\widehat h}\Big(\frac{2\log m}{L}\Big) + {\widehat h}\Big(-\frac{2\log m}{L}\Big)\Big).
$$
Using \eqref{4.3} and partial summation we conclude that the main term when $\ell$ is a square is 
\begin{align} 
\label{5.15} 
-\frac{X}{LN_0} {\widehat \Phi}(0)& \Big( \prod_{p|\ell} \Big( 1 + \frac 1p\Big)^{-1}  \prod_{p\nmid N_0} \Big( 1-\frac 1{p^2}\Big) + O\Big(\frac 1A\Big)\Big) 
\Big( \int_{1}^{\infty} \Big( \widehat{h}\Big( \frac{2\log y}{L}\Big) + {\widehat h} \Big( -\frac{2\log y}{L}\Big) \Big) \frac{dy}{y} + O(1)\Big) \nonumber\\ 
=&- \frac{X}{N_0} {\widehat \Phi}(0) \frac{{ h}(0)}{2} \prod_{p|\ell} \Big( 1 + \frac 1p\Big)^{-1}  \prod_{p\nmid N_0} \Big( 1-\frac 1{p^2}\Big) + 
O\Big( \frac{X}{A} + \frac{X}{L}\Big). 
\end{align} 
Suppose now that $\ell$ is $q$ times a square, for a (unique) prime $q$.  Here the main term may be bounded by 
\begin{equation} 
\label{5.16} 
\ll \frac{X}{LN_0} \frac{\log q}{\sqrt{q}} \prod_{p|\ell} \Big(1 +\frac 1p \Big)^{-1} \prod_{p\nmid N_0} \Big(1 -\frac 1{p^2}\Big); 
\end{equation} 
naturally we can be more precise here, but this bound suffices.

   \section{Proof of Proposition \ref{prop3}}

The $k$-th moment in \eqref{2.2} is treated in Proposition 6 of \cite{RaSo1}.   Briefly, expanding out ${\mathcal P}(d;x)^k$ we must handle 
$$ 
\sum_{\substack{p_1, \ldots, p_k \le x \\ p_i \nmid N_0}} \frac{a(p_1) \cdots a(p_k)}{\sqrt{p_1 \cdots p_k}} \sum_{d\in {\mathcal E}(\kappa, a)} \chi_d(p_1\cdots p_k) 
\Phi \Big( \frac{\kappa d}{X}\Big).  
$$
When $p_1\cdots p_k$ is not a perfect square, the sum over $d$ exhibits substantial cancellation (as mentioned earlier in \eqref{5.6}).  The main term arises from 
terms where $p_1 \cdots p_k$ is a perfect square, which cannot happen when $k$ is odd.  When $k$ is even, the contribution to the main term comes essentially from the 
case when there are $k/2$ distinct primes among $p_1$, $\ldots$, $p_k$ with each distinct prime appearing  twice.  The number of such pairings leads to the coefficient $M_k$, and 
Rankin-Selberg theory is used to obtain $\sum_{p\le x} a(p)^2/p = \log \log x +O(1) \sim \log \log X$.

  To establish \eqref{2.3}, once again we expand ${\mathcal P}(d;x)^k$ and are faced with evaluating 
  $$ 
  \sum_{\substack{p_1, \ldots, p_k \le x \\ p_i \nmid N_0}} \frac{a(p_1) \cdots a(p_k)}{\sqrt{p_1 \cdots p_k}} \sum_{d\in {\mathcal E}(\kappa, a)} \chi_d(p_1\cdots p_k) 
  \Big( \sum_{\gamma_d} h\Big( \frac{\gamma_d L}{2\pi} \Big) \Big).
  $$
  We now appeal to Proposition \ref{prop2}.  The terms where $p_1\cdots p_k$ is neither a square nor a prime times a square contribute, using \eqref{2.11}, 
  $$ 
  \ll X^{\frac 12+\epsilon} e^{\frac L4} \sum_{p_1, \ldots, p_k \le x} 1 \ll X^{\frac 12 +\epsilon} e^{\frac L4}. 
  $$ 
  It remains to consider the cases when this product is a square (which can only happen when $k$ is even) and when it is a prime times a square (which can only happen for odd $k$).  
  In the first case, we obtain (by \eqref{2.12}) a main term 
  $$ 
  \frac{X}{N_0} \prod_{p\nmid N_0} \Big( 1-\frac{1}{p^2}\Big) 
  {\widehat \Phi}(0) \Big( \frac{2\log X}{L} {\widehat h}(0)+ \frac{h(0)}{2}+O\Big( \frac 1L\Big)\Big) \sum_{\substack{p_1, \ldots, p_k \le x \\ p_i \nmid N_0 \\ p_1 \cdots p_k =\square} } 
  \frac{a(p_1)\cdots a(p_k)}{\sqrt{p_1\cdots p_k}}\prod_{p|p_1\cdots p_k} \Big(1+\frac 1{p}\Big)^{-1}. 
  $$
  As before, this main term is dominated by the contribution of terms where there are $k/2$ distinct primes among $p_1$, $\ldots$, $p_k$ each appearing twice, and thus we obtain 
  $$ 
  \frac{X}{N_0} \prod_{p\nmid N_0} \Big( 1-\frac{1}{p^2}\Big) 
  {\widehat \Phi}(0) \Big( \frac{2\log X}{L} {\widehat h}(0)+ \frac{h(0)}{2}+O\Big( \frac 1L\Big)\Big) (M_k +o(1)) (\log \log X)^{\frac k2}. 
  $$ 
  This establishes the result \eqref{2.3} for the case $k$ even.  When $k$ is odd, the contribution of the terms when $p_1\cdots p_k$ is a prime times a square 
  may be bounded by (using \eqref{2.13} of Proposition \ref{prop2}) 
  $$
  \ll \frac{X}{LN_0} \sum_{q\le x} \frac{\log q}{q} \Big(\sum_{\substack{p\le x \\ p\nmid N_0}} \frac{a(p)^2}{p}\Big)^{\frac{k-1}{2}} \ll \frac{X}{N_0} \frac{\log x}{L} (\log \log X)^{\frac{k-1}{2}}
  \ll \frac{X}{N_0} (\log \log X)^{\frac{k-1}{2}}, 
  $$ 
which establishes \eqref{2.3} since $M_k=0$ here.

\bibliographystyle{plain} 
\bibliography{Refs} 

\end{document}